\DeclareMathOperator{\interior}{int}
\begin{document}
\newtheorem{theorem}{Theorem}[section]
\newtheorem{prop}[theorem]{Proposition}

\theoremstyle{definition}
\newtheorem{definition}[theorem]{Definition}
\newtheorem{example}[theorem]{Example}
\newtheorem{remark}[theorem]{Remark}
\newtheorem{conj}[theorem]{Conjecture}

\providecommand{\floor}[1]{\left\lfloor#1\right\rfloor}
\providecommand{\Z}{\mathbb{Z}}
\providecommand{\R}{\mathbb{R}}
\providecommand{\N}{\mathbb{N}}
\providecommand{\C}{\mathbb{C}}
\providecommand{\Q}{{\mathbb{Q}}}

\title[Presburger, generating functions, quasi-polynomials]{Presburger arithmetic, rational generating functions, and quasi-polynomials}
\author{Kevin Woods}
\revauthor{Woods, Kevin}
\address{Department of Mathematics\\Oberlin College\\Oberlin, Ohio 44074, USA}
\email{Kevin.Woods@oberlin.edu}
\urladdr{http://www.oberlin.edu/faculty/kwoods/}
\thanks{Extended abstract published in ICALP 2013.}

\begin{abstract}
Presburger arithmetic is the first-order theory of the natural numbers with addition (but no multiplication). We characterize sets that can be defined by a Presburger formula as exactly the sets whose characteristic functions can be represented by rational generating functions; a geometric characterization of such sets is also given. In addition, if $\mathbf p=(p_1,\ldots,p_n)$ are a subset of the free variables in a Presburger formula, we can define a counting function $g(\mathbf p)$ to be the number of solutions to the formula, for a given $\mathbf p$. We show that every counting function obtained in this way may be represented as, equivalently, either a piecewise quasi-polynomial or a rational generating function. Finally, we translate known computational complexity results into this setting and discuss open directions.
\end{abstract}

\maketitle

\section{Introduction}
\label{sec1}
A broad and interesting class of sets are those that can be defined over  $\N=\{0,1,2,\ldots\}$ with first order logic and addition.

\begin{definition}A \emph{Presburger formula} is a first-order formula in the language of addition, evaluated over the natural numbers. We will denote a generic Presburger formula as $F(\mathbf u)$, where  $\mathbf u$ are the free variables (those not associated with a quantifier); we use bold notation like $\mathbf u$ to indicate vectors of variables. 

We say that a set $S\subseteq\N^{d}$ is a \emph{Presburger set} if there exists a Presburger formula $F(\mathbf u)$ such that $S=\{\mathbf u\in\N^{d}:\ F(\mathbf u)\}$.

\end{definition}

\begin{example}
\label{ex:PresFormula}
The Presburger formula
\[F(u)=\big(u>1\text{ and }\exists b\in\N:\ b+b+1=u\big)\]
defines the Presburger set $\{3,5,7,\ldots\}$. Since multiplication by an integer is the same as repeated addition, we can conceive of a Presburger formula as a Boolean combination of integral linear (in)equalities, appropriately quantified: $\exists b\ \big(u>1$ and $2b+1=u\big)$.
\end{example}

Presburger proved \cite{Presburger91} that the truth of a Presburger \emph{sentence} (a formula with no free variables) is decidable. We would like to understand more clearly the \emph{structure} of a given Presburger set. One way to attempt to do this is to encode the elements of the set into a {generating function}.

\begin{definition}
\label{def:gf}
Given a set $S\subseteq\N^{d}$, its associated \emph{generating function} is
\[f(S;\mathbf x)=\sum_{\mathbf s\in S}\mathbf x^{\mathbf s}=\sum_{(s_1,\ldots,s_d)\in S}x_1^{s_1}x_2^{s_2}\cdots x_d^{s_d}.\]
\end{definition}

For example, if $S$ is the set defined by Example \ref{ex:PresFormula}, then
\[f(S;x)=x^3+x^5+x^{7}+\cdots=\frac{x^3}{1-x^2}.\]
We see that, in this instance, the generating function has a nice form; this is not a coincidence.

\begin{definition}
\label{def:rgf}
A \emph{rational generating function} is a function that can be written in the form
\[\frac{q(\mathbf x)}{(1-\mathbf x^{\mathbf b_{1}})\cdots(1-\mathbf x^{\mathbf b_{k}})},\]
where $q(\mathbf x)$ is a polynomial in $\Q[\mathbf x]$ and $\mathbf b_{i}\in\N^{d}\setminus\{\mathbf 0\}$.
\end{definition}

We will prove that $S\subseteq\N^{d}$ is a Presburger set if and only if $f(S;\mathbf x)$ is a rational generating function. These are Properties 1 and 3 in the following theorem:
\begin{theorem}
\label{thm:PresSet}
Given a set $S\subseteq\N^{d}$, the following are equivalent:
\begin{enumerate}
\item $S$ is a Presburger set,
\item $S$ is a finite union of sets of the form $P\cap (\lambda+\Lambda)$, where $P$ is a polyhedron, $\lambda\in\Z^d$, and $\Lambda\subseteq\Z^d$ is a lattice.
\item $f(S;\mathbf x)$ is a rational generating function.
\end{enumerate}
\end{theorem}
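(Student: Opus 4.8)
The plan is to establish the four implications $(2)\Rightarrow(1)$, $(1)\Rightarrow(2)$, $(2)\Rightarrow(3)$, and $(3)\Rightarrow(2)$, which together yield all the equivalences. The implication $(2)\Rightarrow(1)$ is immediate: a polyhedron is the solution set of a conjunction of integral linear inequalities, and a coset $\lambda+\Lambda$ with $\Lambda=\Z\vec w_1+\cdots+\Z\vec w_r$ is $\{\uu:\exists\,n_1,\dots,n_r,\ \uu=\lambda+\sum_j n_j\vec w_j\}$, which becomes a bona fide Presburger formula after rearranging terms so that all coefficients and constants are natural numbers and the quantified variables range over $\N$; finite unions of Presburger sets are Presburger. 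For $(1)\Rightarrow(2)$ I would invoke quantifier elimination for Presburger arithmetic~\cite{Presburger91}: in the language of $(\N,+,\le)$ enriched with the congruence predicates $x\equiv c\mod m$, every Presburger formula is equivalent to a quantifier-free one, and putting such a formula in disjunctive normal form exhibits $S$ as a finite union of sets, each cut out by finitely many linear (in)equalities (a polyhedron $P$) together with finitely many congruences (a coset $\lambda+\Lambda$, or $\emptyset$). One could alternatively cite that Presburger sets are exactly the semilinear sets and observe that these have the form in (2).

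For $(2)\Rightarrow(3)$, write $S=S_1\cup\cdots\cup S_m$ with $S_i=P_i\cap(\lambda_i+\Lambda_i)$. Inclusion–exclusion expresses $f(S;\x)$ as a $\pm1$-combination of the generating functions of the sets $\bigcap_{i\in I}S_i$ (over nonempty $I$), and each such intersection is again of the form $P\cap(\lambda+\Lambda)$ (intersect the polyhedra; the intersection of cosets of sublattices of $\Z^d$ is a coset or empty). Since the rational generating functions of Definition~\ref{def:rgf} are closed under addition and subtraction, it suffices to treat a single $P\cap(\lambda+\Lambda)$. Fixing a basis $\vec w_1,\dots,\vec w_r$ of $\Lambda$, the affine map $\psi(\vec n)=\lambda+\sum_j n_j\vec w_j$ carries $Q\cap\Z^r$ bijectively onto $P\cap(\lambda+\Lambda)$, where $Q=\psi^{-1}(P)$ is a rational polyhedron; since $P\cap(\lambda+\Lambda)\subseteq S\subseteq\N^d$ the polyhedron $Q$ is pointed, so the classical theory of generating functions of lattice points in pointed rational polyhedra (Stanley; Brion; Barvinok) writes $\sum_{\vec n\in Q\cap\Z^r}\y^{\vec n}$ as a rational function whose denominator is a product of binomials $1-\y^{\vec c}$. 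Substituting $y_j\mapsto\x^{\vec w_j}$ and multiplying by $\x^\lambda$ recovers $f(P\cap(\lambda+\Lambda);\x)$; as this is a genuine power series with exponents in $\N^d$, clearing denominators puts it into the form of Definition~\ref{def:rgf}.

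For $(3)\Rightarrow(2)$, write $f(S;\x)=q(\x)/\prod_{i=1}^k(1-\x^{\vec b_i})$. Both $f(S;\x)$ and the factors $1-\x^{\vec b_i}$ have integer coefficients and their product is the polynomial $q$, so in fact $q\in\Z[\x]$, say $q(\x)=\sum_{\vec a}q_{\vec a}\x^{\vec a}$. Expanding $1/(1-\x^{\vec b_i})=\sum_{n\ge0}\x^{n\vec b_i}$ shows that the coefficient of $\x^{\vec v}$ in $f(S;\x)$ equals $\sum_{\vec a}q_{\vec a}\,\phi(\vec v-\vec a)$, where $\phi(\vec w)=\#\{\vec n\in\N^k: n_1\vec b_1+\cdots+n_k\vec b_k=\vec w\}$; since the $\vec b_i$ lie in $\N^d\setminus\{\vec 0\}$ this count is finite, so $\phi$ is the vector partition function of $\vec b_1,\dots,\vec b_k$. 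By the classical structure theorem for vector partition functions (Blakley; Sturmfels), $\phi$ is piecewise quasi-polynomial, hence so is the coefficient function of $f(S;\x)$: there are finitely many rational polyhedra $R_1,\dots,R_N$ with $S\subseteq\bigcup_t R_t$ and a period $M$ such that on each $R_t\cap\{\vec v:\vec v\equiv\vec r\mod M\}$ this coefficient agrees with a single polynomial $c_{t,\vec r}$. But the coefficient is always $0$ or $1$, being $1$ exactly when $\vec v\in S$; so on each piece whose recession cone is full-dimensional, $c_{t,\vec r}(c_{t,\vec r}-1)$ vanishes on a Zariski-dense set and hence $c_{t,\vec r}\equiv0$ or $\equiv1$ (the remaining, lower-dimensional pieces are dispatched by an easy induction on $d$). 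Therefore $S$ is the union of those finitely many sets $R_t\cap\{\vec v\equiv\vec r\mod M\}$ on which $c_{t,\vec r}\equiv1$, each of the form $P\cap(\lambda+\Lambda)$; this is (2).

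The main obstacle is the classical input behind $(1)\Rightarrow(2)$: quantifier elimination for Presburger arithmetic, whose substantive content is the closure of the class in (2) under projection (existential quantification) — a self-contained proof runs Fourier–Motzkin elimination while carefully tracking the arithmetic progressions arising in the one-dimensional fibers, but I would prefer to cite Presburger. The other delicate ingredient, in $(3)\Rightarrow(2)$, is the structure theory of the coefficient function of a rational generating function, which I have packaged into the piecewise quasi-polynomiality of vector partition functions; the genuinely new (but elementary) point there is that a $\{0,1\}$-valued polynomial on a full-dimensional region must be constant, which is exactly what lets the $0$–$1$ nature of a set's generating function collapse a quasi-polynomial coefficient count back into the geometric form (2).
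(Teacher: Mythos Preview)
Your overall architecture coincides with the paper's: $(2)\Rightarrow(1)$ is immediate; $(1)\Rightarrow(2)$ is Presburger quantifier elimination plus disjunctive normal form; $(2)\Rightarrow(3)$ is lattice points in rational polyhedra (the paper packages this as its Theorem~\ref{thm:gfpolyhedra}, while you unwind it via an affine change of coordinates---both are fine); and $(3)\Rightarrow(2)$ goes through piecewise quasi-polynomiality of vector partition functions together with the observation that a $\{0,1\}$-valued polynomial must be constant. The paper organizes things slightly differently, first proving the more general counting-function statement (Theorem~\ref{thm:PresFcn}) and then specializing to the indicator function $\chi_S$, but the substance is the same.

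There is, however, a genuine gap in your $(3)\Rightarrow(2)$. You argue that on a piece $R_t\cap\{\vec v\equiv\vec r\ \mathrm{mod}\ M\}$ whose recession cone is full-dimensional, the polynomial $c_{t,\vec r}(c_{t,\vec r}-1)$ vanishes on a Zariski-dense set and is hence identically zero. That is correct. But you then dismiss ``the remaining, lower-dimensional pieces'' by induction on $d$. The trouble is that the remaining pieces are \emph{not} lower-dimensional: a piece can be full-dimensional in $\R^d$ yet have a recession cone of smaller dimension. The paper gives exactly this example: on $P=\{(x,y):x\ge 0,\ 0\le y\le 1\}$ the polynomial $q(x,y)=y$ takes only the values $0$ and $1$ on integer points but is not constant. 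Your Zariski-density argument does not apply here (the integer points of $P$ are not Zariski-dense), and there is no lower-dimensional polytope to which ``induction on $d$'' directly applies.

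The paper fills this gap as follows. Let $K$ be the recession cone of $P$, so $P=K+Q$ with $Q$ bounded. On any ray in $P\cap(\lambda+\Lambda)$ the polynomial $q$ restricts to a univariate polynomial with values in $\{0,1\}$, hence is constant; inductively, $q$ is constant on any translated cone $v+K$ intersected with the coset. Since $Q$ is bounded, $P\cap(\lambda+\Lambda)$ is covered by finitely many sets $(v_j+K)\cap(\lambda+\Lambda)$, on each of which $q$ is constant. Collecting those on which the constant is $1$ gives the desired decomposition. Your argument can be repaired along the same lines---slice by affine translates of $\mathrm{span}(K)$ rather than invoking a vague induction---but as written it does not go through.
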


Property 2 gives a nice geometric characterization of Presburger sets;  the set in Example \ref{ex:PresFormula} can be written as $[3,\infty)\cap (1+2\Z)$.

We are particularly interested in generating functions because of their powerful flexibility: we can use algebraic manipulations to answer questions about the set. For example, $f(S;1,1,\ldots,1)$ is exactly the cardinality of $S$ (if finite). More generally, we may  want to count solutions to a Presburger formula as a function of several parameter variables:

\begin{definition}
The \emph{Presburger counting function} for a  Presburger formula $F(\mathbf c, \mathbf p)$ is
\[g_F(\mathbf p)=\#\{\mathbf c\in\N^{d}:\ F(\mathbf c, \mathbf p)\}.\]
\end{definition}
Note that $\mathbf c$ (the \emph{counted} variables) and $\mathbf p$ (the \emph{parameter} variables) are free variables. We will restrict ourselves to counting functions such that $g_F(\mathbf p)$ is finite for all $\mathbf p\in\N^n$. One could instead either include $\infty$ in the codomain of $g_F$ or restrict the domain of $g_F$ to where $g_F(\mathbf p)$ is finite (the domain would itself be a Presburger set).

A classic example is to take $F(\mathbf c,p)$ to be the conjunction of linear inequalities of the form $a_1c_1+\cdots +a_d c_d \le a_0p$, where $a_i\in \Z$. Then $g_F(p)$ counts the number of integer points in the $p^{\text{th}}$ dilate of a polyhedron.
\begin{example} 
\label{ex:triangle}
If $F(c_1,c_2,p)$ is $2c_1+2c_2\le p$, then the set of solutions $(c_1,c_2)\in\N^2$ lies in the triangle with vertices $(0,0)$, $(0,p/2)$, $(p/2,0)$, and
\begin{align*}g_F(p)&=\frac{1}{2}\left(\floor{\frac{p}{2}}+1\right)\left(\floor{\frac{p}{2}}+2\right)\\
&=\begin{cases}\frac{1}{8}p^2+\frac{3}{4}p+1&\text{if $p$ is even,}\\
\frac{1}{8}p^2+\frac{1}{2}p+\frac{3}{8}&\text{if $p$ is odd.}\end{cases}
\end{align*}
\end{example}

The nice form of this function is also not a coincidence. For this particular type of Presburger formula (dilates of a polyhedron), Ehrhart proved \cite{Ehrhart62} that the counting functions are \emph{quasi-polynomials}:

\begin{definition}
A \emph{quasi-polynomial} (over $\Q$) is a function $g:\N^{n}\rightarrow \Q$ such that there exists an $n$-dimensional lattice $\Lambda\subseteq\Z^{n}$ together with polynomials
$q_{\bar{\lambda}}(\mathbf p)\in\Q[\mathbf p]$, one for each
$\bar{\lambda}\in\Z^n/\Lambda$, such that
\[g(\mathbf p)=q_{\bar{\lambda}}(\mathbf p), \text{ for }\mathbf p\in \bar{\lambda}.\]
\end{definition}
In Example \ref{ex:triangle}, we can take the lattice $\Lambda=2\Z$ and each coset (the evens and the odds) has its associated polynomial.
We need something slightly more general to account for all Presburger counting functions:

\begin{definition}
A \emph{piecewise quasi-polynomial} is a function $g:\N^n\rightarrow \Q$ such that there exists a finite
partition $\bigcup_i (P_i\cap\N^n)$ of $\N^n$ with $P_i$ polyhedra (which may not all be full-dimensional)
and there exist quasi-polynomials $g_i$ such that
\[g(\mathbf p)=g_i(\mathbf p) \text{ for }\mathbf p\in P_i\cap\N^n.\]  
\end{definition}

One last thing that is not a coincidence: For the triangle in Example \ref{ex:triangle}, we can compute
\begin{align*}
\sum_{p\in\N}g_F(p)x^p &= 1+x+3x^2+3x^3+6x^4+\cdots\\
&= \frac{1}{(1-x)(1-x^2)^2},
\end{align*}
a rational generating function! The following theorem says that these ideas are -- almost -- equivalent.

\begin{theorem}
\label{thm:PresFcn}
Given a function $g:\N^{n}\rightarrow \Q$ and the following three possible properties:
\begin{enumerate}
\item[A.] $g$ is a Presburger counting function,
\item[B.] $g$ is a piecewise quasi-polynomial, and
\item[C.] $\sum_{\mathbf p\in\N^{n}}g(\mathbf p)\mathbf x^{\mathbf p}$ is a rational generating function,
\end{enumerate}
we have the implications
\[A\Rightarrow B \Leftrightarrow C.\]
\end{theorem}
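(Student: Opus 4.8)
The plan is to establish the implications $B\Rightarrow C$, $C\Rightarrow B$, and $A\Rightarrow B$ one at a time. Two tools recur throughout. The first is Theorem~\ref{thm:PresSet}, which lets us pass freely between Presburger sets, finite unions of sets $P\cap(\lambda+\Lambda)$, and rational generating functions; here and below ``polyhedron'' means ``rational polyhedron,'' as is already needed in Theorem~\ref{thm:PresSet}. The second is the classical fact that a \emph{vector partition function} $\phi_B(\vec q)=\#\{\vec j\in\N^k:\ j_1\vec b_1+\cdots+j_k\vec b_k=\vec q\}$, where $B$ is the matrix with columns $\vec b_1,\dots,\vec b_k$, is a piecewise quasi-polynomial in $\vec q$. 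I would also record as a short lemma two elementary closure properties: the class of piecewise quasi-polynomials $\Z^n\to\Q$ is closed under finite $\Q$-linear combinations (pass to a common refinement of the polyhedral partitions and intersect the underlying lattices) and under precomposition with integer-affine maps $\Z^m\to\Z^n$ (preimages of polyhedra are polyhedra, and the pullback of a full-rank lattice coset is a finite union of full-rank lattice cosets).

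For $B\Rightarrow C$, write $g=\sum_i g_i\cdot\mathbf{1}_{P_i\cap\N^n}$; since a finite sum of rational generating functions is rational, it suffices to treat one quasi-polynomial $g_i$ on one polyhedron $P_i$. Splitting further over the cosets $\bar\lambda\in\Z^n/\Lambda$ of the lattice $\Lambda$ underlying $g_i$, on each coset $g_i$ agrees with a genuine polynomial $q_{\bar\lambda}$, and $P_i\cap\N^n\cap\bar\lambda$ has the form $P'\cap(\lambda+\Lambda)$, so by Theorem~\ref{thm:PresSet} the series $\Phi_{\bar\lambda}(\x)=f(P_i\cap\N^n\cap\bar\lambda;\x)$ is a rational generating function. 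The commuting operators $\theta_j=x_j\,\partial/\partial x_j$ satisfy $\theta_j(\x^{\vec p})=p_j\x^{\vec p}$, preserve $\Q[\x]$, and send $(1-\x^{\vec b})^{-1}$ to $b_j\x^{\vec b}(1-\x^{\vec b})^{-2}$; hence the operator $q_{\bar\lambda}(\theta_1,\dots,\theta_n)$ multiplies the coefficient of $\x^{\vec p}$ by $q_{\bar\lambda}(\vec p)$ and carries a polynomial divided by a product of powers of factors $1-\x^{\vec b_i}$ to another such function. Therefore $\sum_{\vec p}g(\vec p)\x^{\vec p}=\sum_i\sum_{\bar\lambda}q_{\bar\lambda}(\theta_1,\dots,\theta_n)\,\Phi_{\bar\lambda}(\x)$ is a rational generating function.

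For $C\Rightarrow B$, write $\sum_{\vec p}g(\vec p)\x^{\vec p}=q(\x)\big/\prod_{i=1}^{k}(1-\x^{\vec b_i})$ and expand $1\big/\prod_i(1-\x^{\vec b_i})=\sum_{\vec q}\phi_B(\vec q)\x^{\vec q}$. Multiplying by $q(\x)=\sum_{\vec m}c_{\vec m}\x^{\vec m}$ and reading off the coefficient of $\x^{\vec p}$ gives $g(\vec p)=\sum_{\vec m}c_{\vec m}\,\phi_B(\vec p-\vec m)$, a finite $\Q$-linear combination of integer translates of $\phi_B$; by the structure theorem for $\phi_B$ and the closure properties, $g$ is a piecewise quasi-polynomial. (Combined with $B\Rightarrow C$, this incidentally reproves that $\sum_{\vec q}\phi_B(\vec q)\x^{\vec q}$ is rational.)

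For $A\Rightarrow B$, the main step, let $S=\{(\vec c,\vec p)\in\N^{d+n}:\ F(\bb;\vec c,\vec p)\}$, so that $g$ is the fiber-counting function $\vec p\mapsto\#\{\vec c:(\vec c,\vec p)\in S\}$, which is finite for every $\vec p$ by hypothesis. By Theorem~\ref{thm:PresSet}, $S=\bigcup_{i=1}^{m}\bigl(P_i\cap(\lambda_i+\Lambda_i)\bigr)$; intersections of such sets are again of this form (intersections of polyhedra are polyhedra, intersections of lattice cosets are empty or lattice cosets), so by inclusion--exclusion it suffices to prove that, for a single polyhedron $P$ and affine lattice $\lambda+\Lambda\subseteq\Z^{d+n}$, the function $\vec p\mapsto\#\{\vec c\in\N^d:(\vec c,\vec p)\in P\cap(\lambda+\Lambda)\}$ is a piecewise quasi-polynomial. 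Partitioning $\N^{d+n}$ into residue classes modulo a common modulus $q$ with $q\Z^{d+n}\subseteq\Lambda$ makes the condition ``$(\vec c,\vec p)\in\lambda+\Lambda$'' either vacuous or unsatisfiable on each class; on the classes where it holds, the substitution $\vec c=q\vec c'+\vec r$, $\vec p=q\vec p'+\vec s$ turns the linear (in)equalities of $P$ into an integer system in $(\vec c',\vec p')$, and introducing one slack variable per inequality (uniquely determined by $\vec c'$, so the count is unchanged) rewrites the count as $\phi_M(B''\vec p'+\vec d'')$ for suitable integer $M,B'',\vec d''$. By the structure theorem for vector partition functions and the closure properties this is a piecewise quasi-polynomial in $\vec p'$, hence in $\vec p$ on that residue class; reassembling the finitely many residue classes, and then the inclusion--exclusion terms, proves the result. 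I expect the main obstacle to be exactly this last assembly: one must propagate a polyhedral partition, a lattice, and a residue decomposition through the inclusion--exclusion at the same time and then merge them into a single piecewise quasi-polynomial, carefully allowing the non-full-dimensional pieces that the construction produces. (An alternative, if one has a ``projection-with-multiplicity'' operation for rational generating functions, is to deduce $A\Rightarrow C$ directly from Theorem~\ref{thm:PresSet}, specialising the $\vec c$-variables to $1$ after clearing any pure-$\vec c$ factors from the denominator, and then invoke $C\Rightarrow B$.)
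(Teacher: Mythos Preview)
Your argument is correct, but be careful about one logical dependency: you repeatedly invoke Theorem~\ref{thm:PresSet}, whereas in the paper Theorem~\ref{thm:PresSet} is \emph{derived from} Theorem~\ref{thm:PresFcn} (see Remark~\ref{rmk:IndicatorFcn} and Section~4.2). The facts you actually need---that a Presburger set decomposes as a finite union of sets $P\cap(\lambda+\Lambda)$ (Presburger elimination, Theorem~\ref{thm:PresElim}) and that $f\big(P\cap(\lambda+\Lambda);\x\big)$ is rational (Theorem~\ref{thm:gfpolyhedra})---are available independently, so the circularity is only apparent; cite those primitives directly.

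With that adjustment, your route differs from the paper's in two interesting ways. For $B\Rightarrow C$, the paper realizes each monomial $\pp^{\vec a}$ combinatorially: it builds an auxiliary polyhedron $Q$ in extra variables $c_{ij}$ constrained by $1\le c_{ij}\le p_i$, so that the number of $\cc$ over a fixed $\pp\in P$ is exactly $\pp^{\vec a}$; it then applies Theorem~\ref{thm:gfpolyhedra} to $Q\cap(\lambda+\Lambda)$ and specializes $\cc\to\vec 1$ via Theorem~\ref{thm:subst}. Your Euler-operator method ($\theta_j=x_j\,\partial/\partial x_j$) is the standard analytic device for multiplying generating-function coefficients by a polynomial and is arguably slicker here. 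For the $A$-implication, the paper proves $A\Rightarrow C$ directly: after quantifier elimination the solution set is written as a \emph{disjoint} union of pieces $P_i\cap(\lambda_i+\Lambda_i)$, one forms the rational generating function in the joint variables $(\cc,\pp)$, and substitutes $\y=\vec 1$ via Theorem~\ref{thm:subst}---precisely the ``projection-with-multiplicity'' alternative you sketch at the end. Your primary route $A\Rightarrow B$ via inclusion--exclusion, residue classes, slack variables, and vector partition functions is correct (each piece, being a subset of $S$, inherits finite fibers, so all the counts are well defined), but it is noticeably heavier than the paper's one-step specialization and runs into exactly the reassembly bookkeeping you yourself flag as the main obstacle; the paper's approach sidesteps that entirely.
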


\begin{remark}
\label{rmk:IndicatorFcn}
Proving Theorem \ref{thm:PresFcn} will give us much of Theorem \ref{thm:PresSet}, using the following idea.  A set $S\subseteq \Z^d$ corresponds exactly to its characteristic function
\[\chi_S(\mathbf u)=\begin{cases} 1&\text{if }\mathbf u\in S,\\0&\text{if }\mathbf u\notin S.\end{cases}\]
If $S$ is a Presburger set defined by $F(\mathbf u)$, then
\[\chi_S(\mathbf u)=\#\{c\in\N:\ F(\mathbf u)\text{ and }c=0\}\]
is a Presburger counting function.
\end{remark}

In light of Theorem \ref{thm:PresSet}, we might wonder if there is a sense in which $B\Rightarrow A$.  Of course we would have to restrict $g$, for example requiring that its range be in $\N$ (Theorem \ref{thm:PresSet} essentially restricts the range of $g$ to $\{0,1\}$, as it must be a characteristic function). The implication still does not hold, however. For example, suppose the polynomial
\[g(s,t)=(t-s^{2})^{2}\]
were a Presburger counting function given by a Presburger formula $F(\mathbf c, s,t)$, that is,
\[g(s,t)=\#\{\mathbf c\in\N^{d}:\ F(\mathbf c, s,t)\}.\]
 Then the set
\begin{align*}
\big\{(s,t)\in\N^{2}:\ \nexists \mathbf c\ F(\mathbf c, s, t)\big\} &=\{(s,t)\in\N^{2}:\ g(s,t)=0\}\\
&=\{(s,s^{2}):\ s\in\N\}
\end{align*}
would be a Presburger set. This is not the case, however, as it does not satisfy Property 2 in Theorem \ref{thm:PresSet}. If the parameter is univariate, however, the following proposition shows that we do have the implication $B\Rightarrow A$.

\begin{prop}
\label{thm:one_reverse}
Given a function $g:\N\rightarrow \Q$, if $g$ is a piecewise quasi-polynomial whose range is in $\N$, then $g$ is a Presburger counting function.
\end{prop}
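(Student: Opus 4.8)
The plan is to peel off a sequence of easy reductions until only one problem remains --- realizing a single univariate polynomial as a Presburger counting function --- and then to handle that by induction on the degree, using discrete differencing to stay within the world of honest (nonnegative) counts. I would begin with two elementary closure properties of Presburger counting functions in a fixed parameter $\pp$. First, if $g_1=g_{F_1}$ and $g_2=g_{F_2}$, then $g_1+g_2$ is realized by the formula on counted variables $(\cc_1,\cc_2,e)$ equal to $(e=0\wedge F_1\wedge\cc_2=\vec 0)\vee(e=1\wedge F_2\wedge\cc_1=\vec 0)$. Second, if $G(\pp)$ is any Presburger predicate, then $F\wedge G$ realizes the function equal to $g_F$ on $\{\pp:G(\pp)\}$ and zero elsewhere. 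Granted these, it suffices to split $\N$ into finitely many pieces that are Presburger sets and to exhibit, on each piece, the restriction of $g$ as a Presburger counting function.

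Because the parameter is one-dimensional, the piecewise structure is rigid: in a finite partition of $\N$ into polyhedral pieces exactly one piece is infinite, so there are $N$ and $m$ with $g(p)=q_{p\bmod m}(p)$ for all $p\ge N$, where $q_0,\dots,q_{m-1}\in\Q[p]$, while $g(0),\dots,g(N-1)$ are fixed natural numbers (as the range of $g$ lies in $\N$). The initial segment is handled term by term: ``$p=j$ and $c<g(j)$'' contributes exactly $g(j)$ to the count when $p=j$, and $0$ otherwise. For a residue $r$, the substitution $p=mp'+r$ turns $q_r$ into a polynomial $h_r(p')$ which takes values in $\N$ for every $p'$ corresponding to some $p\ge N$; granting that $h_r$ agrees on that range with a Presburger counting function $\#\{\cc:F_{h_r}(\cc,p')\}$, the formula on counted variables $(\cc,p')$ given by ``$p=mp'+r$ and $p\ge N$ and $F_{h_r}(\cc,p')$'' reproduces $g$ on $\{p\ge N:\,p\equiv r\mod m\}$ and vanishes off it (the value of $p'$ is uniquely determined by $p$, so it does not inflate the count). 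Adding the initial-segment formula and the $m$ residue formulas via the closure properties recovers $g$.

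It remains to prove the crux: \emph{if $h\in\Q[p]$ and $h(p)\in\N$ for all integers $p\ge p_0$, then some Presburger counting function agrees with $h$ on $[p_0,\infty)$}. I would induct on $\deg h$. If $\deg h\le0$ then $h$ is a constant in $\N$ and ``$c<h$'' works. If $\deg h\ge1$, eventual nonnegativity forces a positive leading coefficient, so $\Delta h(p):=h(p+1)-h(p)$ is an integer-valued polynomial of degree $\deg h-1$ lying in $\N$ for all $p$ beyond some $p_1\ge p_0$; by induction it is realized on $[p_1,\infty)$ by some $F_\Delta(\cc,q)$. Then for $p\ge p_1$,
\[ h(p)=h(p_1)+\sum_{q=p_1}^{p-1}\Delta h(q)=h(p_1)+\#\bigl\{(q,\cc):\,p_1\le q\le p-1\ \text{and}\ F_\Delta(\cc,q)\bigr\}, \]
and the right-hand count is a Presburger counting function in $p$ (counted variables $q$ and $\cc$); adding the constant $h(p_1)\in\N$ and patching the finitely many values $h(p_0),\dots,h(p_1-1)$ with the closure properties closes the induction.

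The main obstacle --- and the reason the obvious idea of expanding $h$ in the basis $\binom{p}{i}$ fails --- is a sign issue: such an expansion writes $h$ as a difference of two honestly countable functions, and one cannot count a negative quantity, while no fixed basis of ``countable'' polynomials has nonnegative coordinates on every eventually-nonnegative polynomial. Differencing is what rescues the argument, since it both lowers the degree and sends eventually-nonnegative polynomials to eventually-nonnegative ones, so the induction never leaves the realm of genuine counting functions. What remains is bookkeeping --- keeping the thresholds $p_0,p_1,\dots$ consistent and the parameter pieces disjoint --- but nothing conceptually new is needed.
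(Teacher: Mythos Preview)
Your proof is correct and your sequence of reductions matches the paper's almost exactly: handle a finite initial segment directly, reduce to a quasi-polynomial on a tail, split into residue classes, and substitute $p=mp'+r$ to land on a single polynomial taking values in $\N$. The genuine divergence is in how the polynomial case is handled. The paper expands $g(p)=\sum_i a_i p^i$ with integer coefficients, realizes each positive monomial $a_k p^k$ as the count of tuples $(c_0,\dots,c_k)$ with $1\le c_0\le a_k$ and $1\le c_j\le p$, and then \emph{subtracts} each negative monomial $a_ip^i$ by excising $|a_i|p^i$ solutions from the box for the dominant term (which is safe once $p$ is large enough for the leading term to dominate). Your route instead inducts on degree via the forward difference $\Delta h(p)=h(p+1)-h(p)$: since the leading coefficient of $h$ is positive, $\Delta h$ has positive leading coefficient and degree $\deg h-1$, so it is eventually in $\N$ and the telescoping sum $h(p)=h(p_1)+\sum_{q=p_1}^{p-1}\Delta h(q)$ is an honest Presburger count. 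The virtue of your approach is that it never leaves the nonnegative world and so avoids the slightly delicate bookkeeping of carving disjoint ``negative'' boxes out of a dominant term; the virtue of the paper's approach is that it produces an explicit formula in one shot, with counted-variable dimension $\deg g+1$ rather than the nested sums your induction unwinds to.
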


In Section \ref{sec4}, we prove Theorem \ref{thm:PresSet}, Theorem \ref{thm:PresFcn}, and Proposition \ref{thm:one_reverse}. In Section \ref{sec2}, we survey related work. In Section \ref{sec3}, we present the primary tools we need for the proofs. In Section \ref{sec5}, we turn to computational questions; this section surveys known results, but restates them in terms of Presburger arithmetic.

\section{Related work}
\label{sec2}
Presburger arithmetic is a classical first order theory of logic, proven decidable by Presburger \cite{Presburger91}. Bounds on the complexity of decision algorithms have been examined in the general case \cite{Berman80,Cooper72,FR79,FR73,Oppen78}, as a function of the number of quantifier alternations \cite{Furer82,Gradel88,Haase14,RL78,Schoning97}, and for fixed (small) number of quantifier alternations \cite{Gradel89,Scarpellini84}.

A finite automata approach to Presburger arithmetic was pioneered in \cite{Buchi60,Cobham69}, and continues to be an active area of research (see, for example, \cite{BC96,CJ98,Klaedtke08,WB95}). This approach is quite different from the present paper's, but it can attack similar questions: for example, see \cite{PC04} for results on counting solutions to Presburger formulas (non-parametrically).

The importance of understanding Presburger Arithmetic is highlighted by the fact that many problems in computer science and mathematics can be phrased in this language: for example, integer programming \cite{Lenstra83,Schrijver03}, geometry of numbers \cite{Cassels97,Kannan90}, Gr\"obner bases and algebraic integer programming \cite{Sturmfels96,Thomas03}, neighborhood complexes and test sets \cite{Scarf97,Thomas95}, the Frobenius problem \cite{RA05}, Ehrhart theory \cite{BR07,Ehrhart62}, monomial ideals \cite{MS05}, and toric varieties \cite{Fulton93}. Several of the above references analyze the computational complexity of their specific problem. In most of the above references, the connection to Presburger arithmetic is only implicit. 

The algorithmic complexity of specific rational generating function problems has been addressed in, for example, \cite{Barvinok94,BW03,BGP11,DHH03,GM11,HS03}. Several of these results are summarized in Section \ref{sec5} of this current paper.

Connections between subclasses of Presburger arithmetic and generating functions are made explicit in \cite{Barvinok06,BP99,BW03}. Connections between rational generating functions and quasi-polynomials have been made in \cite{Ehrhart62,Stanley80,Sturmfels95}, and the algorithmic complexity of their relationship was examined in \cite{VW04}. 
Counting solutions to Presburger formulas has been examined in \cite{P94}, though the exact scope of the results is not made explicit, and rational generating functions are not used. Similar counting algorithms appear in \cite{CL98}, and \cite{DIV09} proves that the counting functions for a special class of Presburger formuals (those whose parameters $\mathbf p$ only appear in terms $c_i\le p_i$) are piecewise quasi-polynomials. This current paper is the first to state and prove a general connection between Presburger arithmetic, quasi-polynomials, and rational generating functions.

Theorem \ref{thm:PresSet} was originally proved in the author's thesis \cite{WoodsThesis04}; in this paper, it is put into context as a consequence of the more general Theorem \ref{thm:PresFcn}. A simpler geometric characterization  of Presburger sets (equivalent to Property 2 of Theorem \ref{thm:PresSet})   was given in \cite{GS66}: they are the \emph{semi-linear} sets, those sets that can be written as a finite union of sets of the form $S=\{\mathbf a_{0}+\sum_{i=1}^{k}n_i\mathbf a_{i}:\ n_i\in\N\}$, where $\mathbf a_{i}\in\N^d$. Furthermore, if one takes these $S$ to be disjoint and requires the $\mathbf a_{1},\ldots,\mathbf a_{k}$ to be linearly independent, for each $S$ (as \cite{GS66} implicitly prove can be done, made explicit in \cite{DIV09} as \emph{semi-simple} sets), then each $S$ can be encoded with the rational generating function \[\frac{\mathbf x^{\mathbf a_{0}}}{\left(1-\mathbf x_{1}^{\mathbf a_{1}}\right)\cdots\left(1-\mathbf x_{k}^{\mathbf a_{k}}\right)}\]
and we obtain a slightly different version of $2\Rightarrow 3$ in Theorem \ref{thm:PresSet}. There seems to be no previous result analogous to $3\Rightarrow 2$.

An extended abstract of this current paper appeared in ICALP 2013.

\section{Primary background theorems}
\label{sec3}

Here we detail several tools we will use. When these tools have algorithmic (polynomial-time) versions, we mention them too, which will help our discussion in Section \ref{sec5}.

The first tool we need is a way to simplify Presburger formulas. As originally proved \cite{Presburger91} by Presburger (see \cite{Oppen78} for a nice exposition), we can completely eliminate the quantifiers if we are allowed to also use modular arithmetic.
\begin{definition}
An \emph{extended Presburger formula} is a Boolean formula with variables in $\N$ expressible in the elementary language of Presburger Arithmetic extended by the $\bmod\, k$ operations, for constants $k>1$. \end{definition}

\begin{theorem}
\label{thm:PresElim}
Given a formula $F(\mathbf u)$ in extended Presburger arithmetic (and hence given any formula in Presburger arithmetic), there exists an equivalent \emph{quantifier-free} formula $G(\mathbf u)$ such that
\[\{\mathbf u\in\N^{d}:\ F(\mathbf u)\}=\{\mathbf u\in\N^{d}:\ G(\mathbf u)\}.\]
\end{theorem}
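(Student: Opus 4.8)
The plan is to prove this by induction on the number of quantifiers in $F(\bb;\uu)$, reducing everything to the elimination of a single innermost existential quantifier. Since $\forall b\,\Phi$ is equivalent to $\neg\exists b\,\neg\Phi$, and since the negation of a quantifier-free extended Presburger formula is again a quantifier-free extended Presburger formula (boolean combinations of linear (in)equalities and modular congruences are closed under negation, using that $\neg(t\le 0)$ is $t\ge 1$ over the integers and $\neg(t\equiv a \bmod m)$ is a finite disjunction $\bigvee_{a'\neq a}(t\equiv a'\bmod m)$), it suffices to handle $\exists b\,\Psi(b;\uu)$ where $\Psi$ is quantifier-free. The base case (a quantifier-free formula) is trivial.

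First I would put $\Psi$ into disjunctive normal form, so that $\exists b\,\Psi$ becomes a disjunction of formulas $\exists b\,(\text{conjunction of atoms})$, and push the existential inside each disjunct; thus we may assume $\Psi$ is a conjunction of atoms, each either a linear (in)equality or a congruence in the variables $b,\uu$. Next I would clear the coefficient of $b$: let $M$ be the least common multiple of all nonzero coefficients of $b$ appearing in $\Psi$, and observe that $\exists b\,\Psi(b)$ is equivalent to $\exists b'\,(\Psi(b'/M) \text{ and } b' \equiv 0 \bmod M)$ after the substitution $b' = Mb$, where each atom is rescaled so that $b'$ now appears with coefficient $0$, $+1$, or $-1$. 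So we reduce to eliminating a variable (still called $b$) that occurs only with coefficients in $\{-1,0,+1\}$.

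At this point the atoms sort into: lower bounds $L_i(\uu) \le b$, upper bounds $b \le U_j(\uu)$, congruence conditions $b \equiv r_k(\uu) \bmod m_k$, and atoms not involving $b$ (which come along for free). The heart of the argument — and the step I expect to be the main obstacle to write cleanly — is the case analysis producing the quantifier-free equivalent. If there is no upper bound, then for any choice of residues the congruences can be satisfied by taking $b$ large, so $\exists b\,\Psi$ is equivalent to the conjunction of the $b$-free atoms together with consistency of the moduli (which, by CRT, is itself a finite boolean combination of congruences among the parameters, or simply "true" when the $r_k$ are constants); symmetrically if there is no lower bound, using $b$ very negative — except we must also respect $b\in\N$, i.e. $0\le b$, which we simply add as one more lower bound $L_0(\uu)=0$ so that a lower bound always exists. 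When both a lower and an upper bound are present, fix the tightest lower bound $L(\uu)=\max_i L_i(\uu)$ (a disjunction over which $L_i$ is the max, each a quantifier-free condition) and the tightest upper bound $U(\uu)=\min_j U_j(\uu)$; then $\exists b\,\Psi$ holds iff $L(\uu)\le U(\uu)$ and some integer in $[L(\uu),U(\uu)]$ satisfies all the congruences. Writing $m=\lcm(m_k)$, this last condition holds automatically once $U(\uu)-L(\uu)\ge m-1$, and otherwise is decided by the finitely many explicit cases $b = L(\uu), L(\uu)+1,\ldots,L(\uu)+m-2$, each of which turns the congruences into congruence conditions purely on $\uu$. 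Assembling these finitely many quantifier-free pieces with the $b$-free atoms yields the desired $G(\uu)$, and the induction closes. (I would remark that each step is effective, giving the polynomial-time version alluded to in the surrounding text when the number of quantifier alternations is bounded.)
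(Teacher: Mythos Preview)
The paper does not give its own proof of Theorem~\ref{thm:PresElim}; it is stated in Section~3 as a background tool, attributed to Presburger \cite{Presburger91} with a pointer to Oppen \cite{Oppen78} for exposition. Your proposal is precisely the classical Cooper--Presburger elimination procedure described in those references, so there is nothing in the paper to compare it against beyond noting that your sketch matches the cited literature.

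The argument is essentially correct, with two small points worth tightening. First, in the bounded case, when you test the finitely many candidates $b = L(\uu),\,L(\uu)+1,\ldots,L(\uu)+m-2$, you must substitute each candidate into the \emph{entire} conjunction $\Psi$, not only the congruences; otherwise a candidate could satisfy the congruences while violating some $b\le U_j(\uu)$. The standard way to phrase this avoids forming $\max_i L_i$ and $\min_j U_j$ at all: simply take the disjunction, over each lower bound $L_i$ and each $j\in\{0,\ldots,m-1\}$, of $\Psi$ with $b$ replaced by $L_i(\uu)+j$. Second, your closing parenthetical is off: quantifier elimination is not polynomial-time merely under a bound on alternation depth---Oppen's upper bound is triply exponential, and the polynomial-time results the paper discusses in Section~5 require fixing the number of \emph{variables} (and, once alternations appear, also the number of inequalities), not just the alternation depth.
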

For instance, the set from Example \ref{ex:PresFormula} can be written as $(u> 1 \text{ and }u\bmod 2 =1)$.

Next, we give two theorems that tie in generating functions. The first gives us a way to convert from a specific type of Presburger set to a generating function.

\begin{theorem}
\label{thm:gfpolyhedra} Given a point $\lambda\in\Z^d$, a lattice $\Lambda\subseteq\Z^{d}$, and a rational polyhedron $P\subseteq\R_{\ge 0}^{d}$, $f\big(P\cap(\lambda+\Lambda);\mathbf x\big)$ (as given in Definition \ref{def:gf})  is a rational generating function. In addition, for fixed $d$, this rational generating function can be found in polynomial time.
\end{theorem}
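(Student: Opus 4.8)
The plan is to reduce the general statement to the case that $P$ is a pointed rational cone (possibly shifted), handle that case via a unimodular-cone decomposition à la Barvinok, and then track the lattice condition $\lambda + \Lambda$ throughout. First I would normalize the lattice: after an integral change of coordinates we may assume $\Lambda$ is a sublattice of $\Z^d$ of the form $m_1\Z \times \cdots \times m_d\Z$ (Smith normal form), and then writing the sum over $P \cap (\lambda + \Lambda)$ amounts to substituting $x_i \mapsto x_i^{m_i}$ after first splitting off the shift $\x^{\lambda}$ and the coset representatives. Alternatively, and perhaps more cleanly, I would absorb the condition $\uu \in \lambda + \Lambda$ directly into $P$: intersecting the polyhedron with the lattice coset is, after the coordinate change, the same as asking for the generating function of $P' \cap \Z^d$ for a suitable rational polyhedron $P'$, so the real content is the classical statement that $f(P \cap \Z^d; \x)$ is rational for any rational polyhedron $P \subseteq \R^d_{\ge 0}$.

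For that classical statement, the key steps are: (1) triangulate/decompose $P$ into (relatively open or half-open) simplicial pieces so that the lattice-point generating functions add without inclusion-exclusion overlap — this uses the fact that $P \subseteq \R^d_{\ge 0}$ guarantees convergence of all the relevant series as rational functions; (2) for each simplicial cone $C$ with apex $\vec v$, write $f(C \cap \Z^d; \x) = \x^{?}/\prod_i (1 - \x^{\vec g_i})$ when $C$ is unimodular (the fundamental parallelepiped contains a single lattice point, computable directly), and reduce the general simplicial cone to unimodular ones by Barvinok's signed decomposition, which in fixed dimension $d$ produces polynomially many unimodular cones; (3) reassemble, clearing denominators to match Definition \ref{def:rgf}, being careful that each $\vec b_i \in \N^d \setminus \{\vec 0\}$, which holds precisely because $P \subseteq \R^d_{\ge 0}$ forces the cone generators $\vec g_i$ to lie in $\R^d_{\ge 0}$ (and we can scale them to be primitive integer vectors). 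The polynomial-time claim for fixed $d$ is then exactly Barvinok's algorithm, together with the observation that the Smith normal form computation and the coordinate changes are polynomial-time and do not increase $d$.

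I expect the main obstacle to be bookkeeping rather than a deep idea: carefully arranging the decomposition of $P$ (which may be unbounded, lower-dimensional, or not full-dimensional after the coordinate change) into half-open simplicial cones so that the generating functions literally sum without correction terms, and simultaneously ensuring the lattice-coset substitution interacts correctly with the apexes of those cones (the apex of a cone need not lie in $\lambda + \Lambda$, so one picks up a shift monomial that must be a genuine Laurent/polynomial term with the right support). A clean way to sidestep some of this is to first reduce to $\Lambda = \Z^d$ via the coordinate change, then invoke the known polynomial-time Barvinok-type result for $f(P \cap \Z^d; \x)$ as a black box; then the only remaining work is checking that the condition $P \subseteq \R^d_{\ge 0}$ is preserved (it is, up to replacing $P$ by its image under a matrix with nonnegative entries, which we can arrange) so that the output genuinely has the form required by Definition \ref{def:rgf} with $\vec b_i \in \N^d \setminus \{\vec 0\}$.
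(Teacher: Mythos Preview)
Your approach is sound and would yield a correct proof, but note that the paper does not actually prove this theorem in full: it is a background result, and the paper only sketches it by invoking Brion's Theorem to reduce $f\big(P\cap(\lambda+\Lambda);\x\big)$ to a sum over tangent cones at the vertices of $P$, handling each cone via coset representatives and geometric series (as in the worked example with $\uu=(1,0)$, $\vec v=(1,2)$), and citing Barvinok for the polynomial-time claim. Your route is genuinely different: you first absorb the coset $\lambda+\Lambda$ into the polyhedron via a Smith-normal-form change of coordinates, reducing to the $\Z^d$ case, and then triangulate and apply Barvinok's unimodular decomposition directly. Both strategies land at Barvinok's algorithm; the paper's use of Brion sidesteps your half-open triangulation bookkeeping at the cost of working with a signed identity of rational functions rather than a partition of $P$, while your SNF reduction avoids tracking coset representatives inside each cone. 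One small gap worth flagging in your sketch: the claim that $P\subseteq\R_{\ge 0}^d$ forces the generators $\vec g_i$ of the unimodular cones to lie in $\N^d$ is not correct as stated---Barvinok's signed decomposition (and Brion's tangent cones, for that matter) routinely produces cones with generators outside the positive orthant. What \emph{is} true is that the full series converges on the open unit polydisc, so its poles are governed by the recession cone of $P$; only after recombining terms can one rewrite the answer in the form of Definition~\ref{def:rgf} with $\vec b_i\in\N^d\setminus\{\vec 0\}$, and that rewriting step deserves a sentence of justification.
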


The first step to proving this is to use Brion's Theorem \cite{Brion88}, which says that the generating function can be decomposed into functions of the form $f\big(K\cap(\lambda+\Lambda);\mathbf x\big)$, where $K$ is a cone. Then, one can notice that integer points in cones have a natural structure that can be encoded as geometric series.

\begin{example}
Let $K\subseteq\R^2$ be the cone with vertex at the origin and extreme rays $\mathbf u=(1,0)$ and $\mathbf v=(1,2)$. Using the fact that the lattice $(u\Z+v\Z)$ has index 2 in $\Z^2$, with coset representatives $(0,0)$ and $(1,1)$, every integer point in $K$ can be written as either $(0,0)+\lambda_1\mathbf u+\lambda_2\mathbf v$ or $(1,1)+\lambda_1\mathbf u+\lambda_2\mathbf v$, where $\lambda_1,\lambda_2\in\N$. Therefore
\begin{align*}
f(K\cap\Z^2;\mathbf x)&=(\mathbf x^{(0,0)}+\mathbf x^{(1,1)})(1+\mathbf x^\mathbf u+\mathbf x^{2\mathbf u}+\cdots)(1+\mathbf x^{\mathbf v}+\mathbf x^{2\mathbf v}+\cdots)\\
&=\frac{\mathbf x^{(0,0)}+\mathbf x^{(1,1)}}{(1-\mathbf x^\mathbf u)(1-\mathbf x^{\mathbf v})}.
\end{align*}
\end{example}

See \cite[Chapter~VIII]{Barvinok02}, for example, for more details.

The complexity version of Theorem \ref{thm:gfpolyhedra} is due to Barvinok \cite{Barvinok94}. For polynomial-time algorithms yielding generating functions, the output will generally be represented as a \emph{sum} of the basic rational functions given in Definition \ref{def:rgf}; getting a common denominator and simplifying into one fraction may take exponential time. Since the dimension is fixed, the input polyhedra may equivalently be given with either a vertex or hyperplane description.

Next, we would like to be able to perform substitutions on the variables in a rational generating function and still retain a rational generating function; particularly, we would like to substitute in 1's for several of the variables.

\begin{theorem}
\label{thm:subst}
Given a rational generating function $f(\mathbf x)$, then
\[g(\mathbf z)=f(\mathbf z^{\mathbf l_{1}},\mathbf z^{\mathbf l_{2}},\ldots,\mathbf z^{\mathbf l_{d}}),\]
with $\mathbf l_i\in\N^k$,
is also a rational generating function, assuming the substituted values do not lie entirely in the poles of $f$. In particular, substituting in $x_i=\mathbf z^{\mathbf 0}=1$ yields a rational function, if $x_i=1$ is not entirely in the poles of $f$.

Furthermore, for a fixed bound on the number of binomials in the denominator of $f$, there is a polynomial time algorithm to compute $g$ as a rational generating function.
\end{theorem}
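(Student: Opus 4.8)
The plan is to reduce the general monomial substitution to the single operation of substituting $1$ for one distinguished variable, and then to handle that by extracting a Taylor coefficient. Write $f(\x)=q(\x)\big/\prod_{j=1}^{k}(1-\x^{\vec b_j})$ as in Definition~\ref{def:rgf} (if $f$ is handed to us as a sum of such fractions, first put it over a common denominator, which costs only polynomial time when the number of binomials is bounded). Substituting $x_i\mapsto\vec z^{\vec l_i}$ turns the numerator into a polynomial $q(\vec z^{\vec l_1},\ldots,\vec z^{\vec l_d})\in\Q[\vec z]$ and turns the $j$-th binomial into $1-\vec z^{L\vec b_j}$, where $L$ is the matrix whose columns are the $\vec l_i$; since $L$ and each $\vec b_j$ are nonnegative, $L\vec b_j\in\N^{k}$. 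The only obstruction to reading off a rational generating function directly is that $L\vec b_j$ may equal $\vec 0$ for some $j$, making that factor degenerate to $1-1=0$. The hypothesis that the substituted values do not lie entirely in the poles of $f$ is exactly what guarantees that $g(\vec z)=f(\vec z^{\vec l_1},\ldots,\vec z^{\vec l_d})$ still makes sense as a rational function, so the degenerate factors must cancel against the numerator after substitution even though they need not divide $q(\x)$ beforehand.

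To expose this cancellation I will first regularize: introduce one new variable $\epsilon$ and perform instead the substitution $x_i\mapsto\vec z^{\vec l_i}\epsilon$. Now the $j$-th binomial becomes $1-\vec z^{L\vec b_j}\epsilon^{\abs{\vec b_j}_1}$, whose exponent vector $(L\vec b_j,\abs{\vec b_j}_1)\in\N^{k+1}$ is nonzero because $\abs{\vec b_j}_1=\sum_i(b_j)_i>0$, and the numerator remains a genuine polynomial with nonnegative exponents. Hence $h(\vec z,\epsilon):=f(\vec z^{\vec l_1}\epsilon,\ldots,\vec z^{\vec l_d}\epsilon)$ is an honest rational generating function in $(\vec z,\epsilon)$ with the same number of binomials as $f$. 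Since $h(\vec z,1)=f(\vec z^{\vec l_1},\ldots,\vec z^{\vec l_d})=g(\vec z)$ is well defined, $\epsilon=1$ is not a pole of $h(\vec z,\cdot)$ for generic $\vec z$, so $g(\vec z)=\lim_{\epsilon\to1}h(\vec z,\epsilon)$; we have reduced to substituting $\epsilon=1$ into a rational generating function with a bounded number of binomials.

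For that base case, group the binomials of $h$ by whether their $\vec z$-exponent is zero. A binomial $1-\vec z^{\vec c}\epsilon^{m}$ with $\vec c\neq\vec 0$ evaluates at $\epsilon=1$ to $1-\vec z^{\vec c}$, a nonzero element of $\Q(\vec z)$, hence is harmless; a binomial with $\vec c=\vec 0$ is $1-\epsilon^{m}=(1-\epsilon)(1+\epsilon+\cdots+\epsilon^{m-1})$, whose second factor evaluates to $m\neq0$ at $\epsilon=1$. Collecting these, $h=Q(\vec z,\epsilon)\big/\big((1-\epsilon)^{N}E(\vec z,\epsilon)\big)$, where $N$ is the number of degenerate binomials and $E$ is a polynomial with $E(\vec z,1)=c\cdot\prod_j(1-\vec z^{\vec c_j})\neq0$ for a nonzero integer constant $c$ and the nondegenerate exponents $\vec c_j$. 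Expanding $Q/E$ as a power series in $(\epsilon-1)$ — that is, inverting the unit $E$ in $\Q(\vec z)[[\epsilon-1]]$, so that the $r$-th coefficient is a polynomial in $\vec z$ over $E(\vec z,1)^{r+1}$ — regularity at $\epsilon=1$ forces the first $N$ coefficients to vanish, and $g(\vec z)$ equals the coefficient of $(\epsilon-1)^{N}$. That coefficient has the shape (polynomial in $\vec z$)$\big/\prod_j(1-\vec z^{\vec c_j})^{N+1}$, i.e.\ a rational generating function in the sense of Definition~\ref{def:rgf}. For the complexity claim, observe that we need $Q$ and $E$ only modulo $(\epsilon-1)^{N+1}$ with $N\le k$; expanding each factored binomial to this order is polynomial time even when the exponents $m$ are enormous (only the binomial coefficients $\binom{m}{a}$ with $a\le k$ are needed), and the ensuing finite power-series arithmetic is polynomial for fixed $k$.

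The step I expect to be the crux is recognizing that a plain substitution cannot work and that one must pass through the $\epsilon$-perturbation, choosing it (here, the uniform shift $x_i\mapsto x_i\epsilon$) so that \emph{simultaneously} every binomial stays a legitimate nonzero binomial, the numerator stays a polynomial with nonnegative exponents, and the limiting operation collapses to the genuinely one-dimensional problem of evaluating at $\epsilon=1$. After that, extracting the correct Taylor coefficient and verifying that it again has numerator a polynomial and denominator a product of the permitted binomials is routine bookkeeping.
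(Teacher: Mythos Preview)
Your existence argument is correct and, in spirit, matches the paper's one-line remark that ``the existence version \ldots is immediate: simply substitute in''; your $\epsilon$-regularization is a tidy way to make visible the cancellation of the degenerate binomials, though it is more machinery than the bare existence claim needs.

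The complexity argument, however, has a genuine gap right at the start. You write that if $f$ is given as a sum of basic fractions, one should ``first put it over a common denominator, which costs only polynomial time when the number of binomials is bounded.'' The paper says exactly the opposite (just after Theorem~\ref{thm:gfpolyhedra}): the inputs arising in practice are sums of basic fractions, each with a bounded number of binomials, and ``getting a common denominator and simplifying into one fraction may take exponential time.'' Concretely, with $m$ summands and $k$ binomials each, the combined numerator $\sum_i q_i\prod_{j\ne i}D_j$ can have on the order of $2^{(m-1)k}$ monomials. Once you collapse to a single fraction, your later bound $N\le k$ also becomes $N\le mk$, and the truncated power-series arithmetic is no longer constant-order.

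The fix is already implicit in your own idea and is what the paper's Laurent-series example (and the cited Barvinok--Woods argument) does: do \emph{not} combine the summands. Apply the substitution $x_i\mapsto \vec z^{\vec l_i}\epsilon$ to each summand separately; each remains a basic fraction with at most $k$ binomials, so you can compute its Laurent expansion in $(\epsilon-1)$ to order $k$ in polynomial time. The principal parts of the individual summands need not vanish, but their \emph{sum} does (since $\epsilon=1$ is not a pole of $h$), and the sum of the order-$0$ coefficients gives $g(\vec z)$ as a sum of basic rational generating functions in $\vec z$. That is the route the paper sketches; your perturbation device is compatible with it, but the common-denominator shortcut is not.
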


The existence version of this theorem is immediate: if substituting in $x_i=\mathbf z^{\mathbf l_i}$ would make any of the binomials in the denominator of $f$ zero (when $f$ is written in the form from Definition \ref{def:rgf}), then that binomial must be a factor of the numerator (or else $x_i=\mathbf z^{\mathbf l_i}$ would lie entirely in the poles of $f$); therefore, substituting in  $x_i=\mathbf z^{l_i}$ yields a new rational generating function. The complexity version is more difficult and is due to Barvinok and Woods \cite{BW03}. The problem is that we may have the rational generating function input as a sum of rational functions. Unfortunately, the substituted values may be in the poles of some of the terms of the sum without being in the poles of the entire generating function; careful limits must be taken. 

\begin{example}
Suppose we would like to substitute $x=1$ into
\[f(x)=\frac{1}{1-x}-\frac{x^{1000}}{1-x}.\]
Combining and simplifying to $f(x)=1+x+\cdots+x^{999}$ is horribly inefficient. Since $x=1$ is a pole of both fractions, we must be careful: Consider the respective Laurent series expansions of the terms:
\begin{align*}\frac{1}{1-x}&=c_{-1}(x-1)^{-1}+c_{0}(x-1)^0+c_1(x-1)^1+\cdots,\\
 -\frac{x^{1000}}{1-x}&=d_{-1}(x-1)^{-1}+d_{0}(x-1)^0+d_1(x-1)^1+\cdots.
 \end{align*}
 We know that $1$ is not a pole of $f(x)$, so $c_{-1}+d_{-1}=0$. Substituting in $x=1$ into the sum, we will get simply $c_0+d_0$. Carefully calculating $c_0$ and $d_0$ will give us $f(1)=1000$.
\end{example}

Finally, we need a connection between Presburger formulas and quasi-poly\-no\-mials. This is given by Sturmfels \cite{Sturmfels95}:

\begin{definition}
Given $\mathbf a_{1},\ldots,\mathbf a_{d}\in\N^{n}$, the \emph{vector partition function} $g:\N^{n}\rightarrow\N$ is defined by
\[g(\mathbf p)=\#\{(\lambda_{1},\ldots,\lambda_{d})\in\N^{d}:\ \mathbf p=\lambda_{1}\mathbf a_{1}+\cdots+\lambda_{d}\mathbf a_{d}\},\]
that is, the number of ways to partition the vector $\mathbf p$ into parts taken from $\{\mathbf a_i\}$.
\end{definition}

\begin{theorem}
\label{thm:vpf}
Any vector partition function is a piecewise quasi-polynomial.
\end{theorem}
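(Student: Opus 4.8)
The plan is to realize the vector partition function $g(\vec p)$ as a Presburger counting function and then invoke the implication $A\Rightarrow B$ of Theorem \ref{thm:PresFcn} — except that Theorem \ref{thm:PresFcn} is proved later (Section 4), and more to the point it is natural to want a direct argument using the generating function machinery of Section 3. So instead I would argue as follows. First, observe that
\[
\sum_{\vec p\in\N^n} g(\vec p)\,\vec x^{\vec p}
 \;=\; \prod_{i=1}^{d} \frac{1}{1-\vec x^{\vec a_i}},
\]
since expanding each factor $1/(1-\vec x^{\vec a_i}) = \sum_{\lambda_i\ge 0}\vec x^{\lambda_i\vec a_i}$ and multiplying collects, in the coefficient of $\vec x^{\vec p}$, exactly the number of tuples $(\lambda_1,\ldots,\lambda_d)\in\N^d$ with $\sum_i \lambda_i\vec a_i=\vec p$. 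Thus $\sum_{\vec p} g(\vec p)\vec x^{\vec p}$ is manifestly a rational generating function, i.e.\ property $C$ of Theorem \ref{thm:PresFcn} holds for $g$. This reduces the theorem to the implication $C\Rightarrow B$.

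However, to keep the logical structure of the paper clean (Theorem \ref{thm:vpf} is a \emph{tool} used to prove Theorem \ref{thm:PresFcn}, so we should not cite Theorem \ref{thm:PresFcn} here), I would give the self-contained geometric argument. Write the partition count as the number of lattice points in a polytope slice: $g(\vec p) = \#\big(Q_{\vec p}\cap\Z^d\big)$, where $Q_{\vec p} = \{\vec\lambda\in\R_{\ge 0}^d : A\vec\lambda = \vec p\}$ and $A$ is the $n\times d$ matrix with columns $\vec a_i$. The key step is a \emph{chamber decomposition}: as $\vec p$ ranges over $\N^n$, the combinatorial type of the polytope $Q_{\vec p}$ changes only when $\vec p$ crosses the boundary of one of finitely many polyhedral cones (the chambers), obtained as follows. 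For each subset $\sigma\subseteq\{1,\ldots,d\}$ of size $n$ indexing a full-rank submatrix $A_\sigma$, the set $\text{cone}(\vec a_i : i\in\sigma)$ is a simplicial cone; the common refinement of all these cones partitions $\text{cone}(\vec a_1,\ldots,\vec a_d)$ into finitely many relatively-open polyhedral chambers $C_j$, and on the closure of each chamber the vertices of $Q_{\vec p}$ are given by fixed affine-linear (in fact linear) functions of $\vec p$. Restricting attention to one chamber $C_j$, standard Ehrhart theory for the parametrized polytope $Q_{\vec p}$ (via Brion's theorem, Theorem \ref{thm:gfpolyhedra}, or directly) shows that $g$ agrees on $C_j\cap\N^n$ with a quasi-polynomial $g_j$; partitioning each $C_j\cap\N^n$ further into finitely many pieces to handle the relatively-open versus closed boundary issues and the lattice-coset bookkeeping, we obtain the required finite partition $\bigcup_i(P_i\cap\N^n)$ of $\N^n$ by polyhedra together with quasi-polynomials agreeing with $g$ on each piece. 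Hence $g$ is a piecewise quasi-polynomial.

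The main obstacle is the chamber decomposition and the verification that $g$ is quasi-polynomial on each chamber: one must check that the vertices of $Q_{\vec p}$ depend linearly on $\vec p$ within a chamber, that these vertex lattice points contribute quasi-polynomially (periodicity coming from the index of the sublattice generated by $\{\vec a_i : i\in\sigma\}$ and from the "rounding" needed when a vertex is non-integral), and that the finitely many lower-dimensional boundary strata can be absorbed into the piecewise structure rather than causing a genuine discontinuity. An alternative, slicker route that sidesteps Ehrhart-on-chambers: apply Theorem \ref{thm:gfpolyhedra} to get the rational generating function $\prod_i 1/(1-\vec x^{\vec a_i})$ (already in hand), then extract its coefficients; expanding this rational function in partial fractions in each variable produces terms whose coefficients are products of quasi-polynomials times geometric factors supported on translated cones, and collecting these gives the piecewise quasi-polynomial directly. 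Either way, I would present the chamber-decomposition argument as the conceptual heart, since it is the step where the word "piecewise" becomes unavoidable — the set in Remark \ref{rmk:IndicatorFcn}'s spirit shows that a single global quasi-polynomial will not suffice.
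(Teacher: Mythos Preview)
Your proposal is correct, and in fact the paper does not supply its own proof of this theorem: it is quoted as a background result of Sturmfels \cite{Sturmfels95}, with a pointer to Beck \cite{Beck04} for a self-contained argument via the partial-fraction expansion of $\prod_i 1/(1-\x^{\vec a_i})$. The two routes you sketch---the chamber decomposition of the cone $\text{cone}(\vec a_1,\ldots,\vec a_d)$ on which the combinatorial type of $Q_{\vec p}$ is constant, and the partial-fraction extraction of coefficients---are precisely the Sturmfels and Beck arguments the paper cites, so you are aligned with the paper's intended references; your care about circularity (not invoking Theorem~\ref{thm:PresFcn}) is also exactly right, since the paper uses Theorem~\ref{thm:vpf} as an input to the proof of $C\Rightarrow B$.
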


See \cite{Beck04} for a self-contained explanation utilizing the partial fraction expansion of the generating function
\[\sum_{\mathbf p\in\N^n} g(\mathbf p)\mathbf x^{\mathbf p} = \frac{1}{(1-\mathbf x^{\mathbf a_1})\cdots(1-\mathbf x^{\mathbf a_d})};\]
this equality can be obtained by rewriting the rational function as a product of infinite geometric series:
\[(1+\mathbf x^{\mathbf a_1}+\mathbf x^{2\mathbf a_1}+\cdots)\cdots(1+\mathbf x^{\mathbf a_d}+\mathbf x^{2\mathbf a_d}+\cdots).\]
For example, if $a_1=1$ and $a_2=a_3=2$, then the vector partition function is encoded by the generating function
\[\frac{1}{(1-x)(1-x^2)^2}.\]
We saw previously that this generating function corresponds to the quasi-poly\-no\-mial in Example \ref{ex:triangle}.

 A complexity version of Theorem \ref{thm:vpf} is discussed in Section \ref{subsect:BC}; an appropriate representation for piecewise quasi-poly\-nomials must first be decided on.

\section{Proofs}
\label{sec4}

\subsection{Proof of Theorem \ref{thm:PresFcn}}

\noindent$\mathbf{A\Rightarrow C.}$

Given a Presburger counting function, $g(\mathbf p)=\#\{\mathbf c\in\N^{d}:\ F(\mathbf c, \mathbf p)\}$, we first apply Presburger Elimination (Theorem \ref{thm:PresElim}) to $F$ to obtain a quantifier free formula, $G(\mathbf c, \mathbf p)$, in extended Presburger arithmetic such that $g(\mathbf p)=\#\{\mathbf c\in\N^{d}:\ G(\mathbf c, \mathbf p)\}$. Integers which satisfy a statement of the form \[a_1p_1+\cdots+a_np_n+a_{n+1}c_1+\cdots +a_{n+d}c_d\equiv a_0 \mod{m}\] are exactly sets $\lambda+\Lambda$, where $\lambda\in\Z^{n+d}$ and $\Lambda$ is a lattice in $\Z^{n+d}$. Since $G(\mathbf c, \mathbf p)$ is a Boolean combination of linear inequalities and these linear congruences, we may write the set, $S$, of points $(\mathbf c,\mathbf p)$ which satisfy $G(\mathbf c, \mathbf p)$ as a \emph{disjoint} union
\[S=\bigcup_{i=1}^k P_i\cap(\lambda_i+\Lambda_i),\]
where, for $1\le i\le k$, $P_i\subseteq\R_{\ge 0}^{n+d}$ is a polyhedron,
$\Lambda_i$ is a sublattice of $\Z^{n+d}$, and $\lambda_i$ is in $\Z^{n+d}$. (To see this, convert the formula into disjunctive normal form; each conjunction will be of this form $P_i\cap(\lambda_i+\Lambda_i)$; these sets may overlap, but their overlap will also be of this form.)

Let $S_i=P_i\cap(\lambda_i+\Lambda_i)$. By Theorem \ref{thm:gfpolyhedra}, we know we can write $f(S_i;\mathbf y, \mathbf x)$ as a rational
generating function,
and so
\[f(S;\mathbf y,\mathbf x)=\sum_i f(S_i;\mathbf y,\mathbf x)= \sum_{(\mathbf c,\mathbf p):\ G(\mathbf c, \mathbf p)} \mathbf y^{\mathbf c}\mathbf x^{\mathbf p}\]
can be written as a rational generating function. Finally, we substitute $\mathbf y=(1,1,\ldots,1)$, using Theorem \ref{thm:subst}, to obtain the rational generating function
\[\sum_{\mathbf p} \#\{\mathbf c\in\N^{d}:\ G(\mathbf c, \mathbf p)\}\mathbf x^{\mathbf p}=\sum_{\mathbf p} g(\mathbf p)\mathbf x^{\mathbf p}.\]

\noindent$\mathbf{C\Rightarrow B.}$

It suffices to prove this for functions $g$ such that
$\sum_{\mathbf p}g(\mathbf p)\mathbf x^{\mathbf p}$ is a rational generating function of the form
\[\frac{\mathbf x^{\mathbf q}}{(1-\mathbf x^{\mathbf a_1})(1-\mathbf x^{\mathbf a_2})\cdots(1-\mathbf x^{\mathbf a_k})},\]
where $\mathbf q\in\N^n,\mathbf a_i\in\N^n\setminus\{0\}$, because the property of being a piecewise
quasi-polynomial is preserved under linear combinations.
Furthermore, we may take $\mathbf q=(0,0,\ldots,0)$, because multiplying by $\mathbf x^{\mathbf q}$ only
shifts the domain of the function $g$.  Expanding this rational generating function as a product of infinite geometric series,
\[\sum_{\mathbf p}g(\mathbf p)\mathbf x^{\mathbf p}=(1+\mathbf x^{\mathbf a_1}+\mathbf x^{2\mathbf a_1}+\cdots)\cdots(1+\mathbf x^{\mathbf a_k}+\mathbf x^{2\mathbf a_k}+\cdots),\]
and we see that
\[g(\mathbf p)=\#\{(\lambda_{1},\ldots,\lambda_{k})\in\N^{k}:\ \mathbf p=\lambda_{1}\mathbf a_{1}+\cdots+\lambda_{k}\mathbf a_{k}\}.\]
This is exactly a vector partition function, which Theorem \ref{thm:vpf} tells us is a piecewise quasi-polynomial.

\noindent$\mathbf{B\Rightarrow C.}$ 

Any piecewise quasi-polynomial can be written as a linear combination of functions of the form
\[g(\mathbf p)=\begin{cases} \mathbf p^{\mathbf a} &\text{if }\mathbf p\in P\cap(\lambda+\Lambda),\\
0 & \text{otherwise},\end{cases}\]
where $\mathbf a\in \N^n$, $P\subseteq\R^n_{\ge 0}$ is a polyhedron, $\lambda\in\Z^n$, and $\Lambda$ is a sublattice of $\Z^n$. Since linear combinations of rational generating functions are rational generating functions, it suffices to prove it for such a $g$.
Let $c_{ij}$, for $1\le i\le n$ and $1\le j\le a_i$, be variables, and define the polyhedron
\begin{align*}Q=\{(\mathbf p, \mathbf c)&\in\N^{n+a_1+\cdots+a_n}:\\
&\mathbf p\in P \text{ and }1\le c_{ij}\le p_i\text{ for all }c_{ij}\}.
\end{align*}
This $Q$ is defined so that $\#\{\mathbf c:\ (\mathbf p, \mathbf c)\in Q\}$ is $p_1^{a_1}\cdots p_n^{a_n}=\mathbf p^{\mathbf a}$ for $\mathbf p\in P$ (and 0 otherwise). Using Theorem \ref{thm:gfpolyhedra}, we can find the generating function for the set $Q\cap(\lambda+\Lambda)$ as a rational generating function. Substituting $\mathbf c=(1,1,\ldots,1)$, using Theorem \ref{thm:subst}, gives us
$\sum_{\mathbf p}g(\mathbf p)\mathbf x^{\mathbf p}$ as a rational generating function.

\subsection{Proof of Theorem \ref{thm:PresSet}}

Given a set $S\subseteq \Z^d$, define the characteristic function, $\chi_S: \N^d\rightarrow \{0,1\}$, as in Remark \ref{rmk:IndicatorFcn}. Define a new property:
\begin{enumerate}
\item[$2'$.] $\chi_S$ is a piecewise quasi-polynomial.
\end{enumerate}
Translating Theorem \ref{thm:PresFcn} into properties of $S$ and $\chi_S$, we have
\[1\Rightarrow (2' \Leftrightarrow 3).\]
So we need to prove $2\Rightarrow 1$ and $2'\Rightarrow 2$.

\medskip

\noindent$\mathbf{2\Rightarrow 1.}$

This is straightforward: the property of being an element of $\lambda+\Lambda$ can be written using linear congruences and existential quantifiers, and the property of being an element of $P$ can be written as a set of linear inequalities.

\medskip

\noindent$\mathbf{2'\Rightarrow 2.}$

Since $\chi_S$ is a piecewise quasi-polynomial, it is constituted from associated polynomials. Let us examine such a polynomial $q(\mathbf p)$ that agrees with $\chi_S$ on some $P\cap (\lambda+\Lambda)$, where $P\subseteq{\R_{\ge 0}^n}$ is a polyhedron, $\lambda\in\Z^n$, and $\Lambda$ a sublattice of $\Z^n$.  It suffices to prove that 2 holds for $S\cap P\cap (\lambda+\Lambda)$, since $S$ is the disjoint union of such pieces.

Ideally, we would like to argue that, since $q$ only takes on  the values 0 and 1, the polynomial $q$ must be constant on $P\cap (\lambda+\Lambda)$, at least if $P$ is unbounded. This is not quite true; for example, if \[P=\big\{(x,y)\in\R^2:\ x\ge 0 \text{ and }0\le y \le
1\big\},\] then the polynomial $q(x,y)=y$ is 1 for $y=1$ and 0 for $y=0$.

What we can say is that $q$ must be constant on any infinite ray contained in $P\cap (\lambda+\Lambda)$: if we parametrize the ray by $\mathbf x(t)=(x_1(t),\cdots, x_n(t))$, then $q(\mathbf x(t))$ is a \emph{univariate} polynomial that is either 0 or 1 at an infinite number of points, and so must be constant. Inductively, we can similarly show that $q$ must be constant on any cone contained in $P$.

Let $K$ be the cone with vertex at the origin \[K=\{\mathbf y\in\R^n:\
\mathbf y+P\subseteq P\}.\] Then $K$ is the largest cone such that
the cones $\mathbf x+K$ are contained in $P$, for all $\mathbf x\in P$; $K$ is often called the
\emph{recession cone} or \emph{characteristic cone} of $P$ (see
Section 8.2 of \cite{Schrijver86}), and the polyhedron $P$ can be decomposed into a Minkowski sum $K+Q$, where $Q$ is a \emph{bounded} polyhedron. We can write $P\cap(\lambda+\Lambda)$
as a finite union (possibly with overlap) of sets of the form
\[Q_j=(v_j+K)\cap(\lambda+\Lambda),\]
for some $v_j$, and on each of these pieces $q$
must be constant. If $q$ is the constant 1 on $Q_j$, then $Q_j$ is contained in $S$, and if $q$ is the constant 0, then none of $Q_j$ is in $S$. Since $S$ is a finite union of the appropriate $Q_j$,  $S$ has the form needed for Property 2.

\subsection{Proof of Proposition \ref{thm:one_reverse}}
Given that $g$ is a piecewise quasi-poly\-no\-mial with range in $\N$, we must find a Frobenius formula $F(\mathbf c,p)$ such that $g(p)=\#\{\mathbf c\in\N^d:\ F(\mathbf c,p)\}.$
It suffices to find an $F$ that agrees with $g$ for sufficiently large $p$, because for any finite set $\{p_i\}$ we may include
\[(p=p_i)\Rightarrow \big(1\le c_1\le g(p_i)\wedge c_2=\cdots=c_d=0\big) \]
in the creation of $F$, so that the number of $\mathbf c$ satisfying $F$ is exactly $g(p_i)$.

Since the domain of $g$ is one-dimensional, a piecewise quasi-polynomial can be thought of as a function that is a quasi-polynomial for sufficiently large $p$. Therefore we may assume, without loss of generality, that $g$ is a quasi-polynomial. Each lattice coset may be handled separately and combined to form the final formula, $F$. Therefore we may assume that $g$ is a polynomial.

Let $m$ be the least common multiple of the denominators of the coefficients of $g\in\Q[p]$. Suppose $p=mb+i$, for some $b$ (which can be encoded in a Presburger formula, with $m$ and $i$ constants and $b$ a bound variable). Considering $(mb+i)^k$ as a polynomial in $b$, all coefficients but (possibly) the constant term are multiples of $m$, so considering $g(mb+i)$ as a polynomial in $b$, all coefficients but (possibly) the constant term will be integral. Since $g(mb+i)$ is integer-valued, the constant term must also be an integer, that is, $g(mb+i) \in\Z[b]$. Since each residue class of $p$ mod $m$ can be handled separately and combined at the end, we may assume without loss of generality that $m=1$ and $g\in\Z[p]$.

Let $g(p)=\sum_{i=0}^k a_ip^i$, $a_i\in\Z$. Since $g(p)\ge 0$ for all $p$, the leading coefficient, $a_k$, must be positive. Then $a_kp^k$ is the number of $\mathbf c\in\N^{k+1}$ such that
\[(1\le c_0\le a_k)\wedge(1\le c_1\le p)\wedge\cdots\wedge(1\le c_k\le p).\]
All other \emph{positive} terms of $g$ can be similarly encoded as counting solutions to a Presburger formula (carefully defined so the solution sets are disjoint). To take care of a \emph{negative} term $a_ip^i$, take $p$ sufficiently large so that $a_kp^k$ sufficiently dominates all other terms (as before, the finite set of smaller $p$ can be dealt with separately); now remove $a_ip^i$ points from the set of solutions. For example,
\begin{align*}\neg\Big((c_0&=c_{i+2}=\cdots=c_k=1)\wedge(1\le c_1\le a_i)\\
&\wedge(1\le c_2\le p)\wedge\cdots\wedge(1\le c_{i+1}\le p)\Big)
\end{align*}
removes $a_ip^i$ solutions, as long as $p\ge a_i$ (to do this for multiple $i$, define these subtractions carefully so that the solution sets do not overlap). All together, this give us our desired $F(\mathbf c, p)$ certifying that $g$ is a Presburger counting function.

\section{Computational aspects}
\label{sec5}
We first describe computational aspects of $A\Rightarrow (B\text{ and }C)$. Many partial results are known here, but there are several open problems. Then we discuss $B\Leftrightarrow C$, which works nicely: not only are $B$ and $C$ logically equivalent, but, after we make sense of the problem, they are computationally equivalent (in fixed dimension). Finally, we discuss other related complexity results.

\subsection{$A\Rightarrow (B\text{ and }C)$}
\label{subsec:AC}
Note that another reasonable class of problems to study would be
sentences where we are also allowed multiplication of variables, for
example
\[\exists a\in\N, \exists b\in\N:\  a^2+2b^2\le 31.\] These problems, however, are very hard: undecidability, in the general case, is a consequence of G\"odel's First Incompleteness Theorem \cite{Godel31}. Even if we focus on sentences with only existential quantifiers and a fixed number of variables (restrictions that we shall see work well for Presburger arithmetic), it is still undecidable: In
fact, there is a certain multivariate polynomial
$p(x_0,x_1,\ldots,x_d)$ such that the class of
problems \begin{align*}\text{Given }&a\in\N,\text{ decide whether } \exists b_1,\exists b_2,\ldots,\exists b_d:\\
&p(a,b_1,b_2,\ldots,b_d)=0, \text{ with }b_i\in\N\end{align*} is
undecidable.  This is a consequence of the DPRM-theorem (after
Davis, Putnam, Robinson, and Matiyasevich, see, for example,
\cite{Davis73}), which solves Hilbert's 10th problem in the
negative. 

The Presburger arithmetic problem (deciding whether a Presburger sentence is true or false) is, at least, decidable, as
originally proved \cite{Presburger91} by Presburger in 1929. Since
then, better algorithms have been found.  For example, D. Oppen gave
an algorithm \cite{Oppen78}, based on work of D. Cooper
\cite{Cooper72}, with running time $2^{2^{2^{c\phi}}}$, where $\phi$
is the input size of the problem and $c$ is a constant.
Nevertheless, Fischer and Rabin gave lower bounds on the running time \cite{FR73}: any algorithm that solves all Presburger arithmetic
problems will sometimes take at least $2^{2^{c'\phi}}$ steps, where
$c'$ is a constant.

Among the simplest Presburger sentences are integer programming problems, which are themselves NP-complete if the dimension is not fixed. This suggests that any interesting polynomial time results will require a fixed number of variables.

We discuss three types of problems for a Presburger formula $F(\mathbf u)$:
\begin{itemize}\item The \emph{decision} problem: Is  $\exists \mathbf u\ F(\mathbf u)$ true or false?
\item The \emph{counting} problem: How many solutions $\mathbf u$ to $F(\mathbf u)$ are there?
\item The \emph{generating function} problem: Compute $\sum_{\mathbf u:\ F(\mathbf u)}\mathbf x^{\mathbf u}$ as a rational generating function.
\end{itemize}

Clearly, the counting problem is harder than the decision problem. Now notice that if $\sum_{\mathbf u:\ F(\mathbf u)}\mathbf x^{\mathbf u}$ can be computed in polynomial time, then the counting problem $\#\mathbf u\ F(\mathbf u)$ can be solved in polynomial time: use Theorem \ref{thm:subst} to plug in $\mathbf x=(1,1,\ldots, 1)$. Consequently, the generating function problem is the hardest of the three and the decision problem $\exists \mathbf u\ F(\mathbf u)$ is the easiest. Historically, polynomial time results have been discovered first for the decision problems, and these solutions have not needed generating functions. However, polynomial time results for counting problems have generally required the full power of generating functions. We now discuss several of these results.

First we examine, for a fixed number of variables and \emph{quantifier-free} $F(\mathbf u)$, deciding $\exists \mathbf u\ F(\mathbf u)$. This is effectively Lenstra's algorithm for integer programming  \cite{Lenstra83}, which decides whether there exists a $\mathbf u$ satisfying the conjunction of several linear inequalities. The only additional step we need is to note that a Boolean combination of linear inequalities can be put in disjunctive normal form in polynomial time. This is slightly surprising, because it is not doable for general Boolean functions: for example, the disjunctive normal form for $\bigwedge_{i=1}^n(A_{i1}\vee A_{i2})$ requires $2^n$ disjunctions.

\begin{prop}
\label{PropDNF} Fix $d$.  There is a polynomial time algorithm
which, given a quantifier-free formula $F(u_1,u_2,\ldots,u_d)$
consisting of linear inequalities and Boolean operations, converts
$F$ into disjunctive normal form.
\end{prop}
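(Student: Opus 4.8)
The plan is to show that, for fixed $d$, the number of distinct hyperplanes appearing among the linear inequalities of $G$ controls everything, and that the $d$-dimensional arrangement they cut out of $\R^d$ has only polynomially many cells. First I would collect the set $H = \{H_1, \ldots, H_m\}$ of affine hyperplanes underlying the atomic inequalities of $G$; note $m$ is at most the input size. Each hyperplane $H_j$ partitions $\R^d$ into three pieces according to the sign of the corresponding linear form $\ell_j(\uu)$ (strictly positive, zero, strictly negative), so a cell of the arrangement is specified by a sign vector $\sigma \in \{+, 0, -\}^m$. On any such cell, every atomic subformula of $G$ has a fixed truth value, hence $G$ itself has a fixed truth value; so the set $\{\uu : G(\uu)\}$ is the union of those cells on which $G$ evaluates to true, and each cell is a conjunction of (non-strict and strict) linear inequalities — exactly a term of a disjunctive normal form.

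The key quantitative fact is that, for fixed $d$, the number of nonempty cells of an arrangement of $m$ hyperplanes in $\R^d$ is $O(m^d)$, which is polynomial in the input. So I would proceed as follows. Step one: from $G$, extract the hyperplane list and, for each, its defining form. Step two: enumerate all realizable sign vectors. Rather than naively looping over all $3^m$ sign patterns, I would build the arrangement incrementally, inserting the hyperplanes one at a time and maintaining the list of nonempty cells; at each insertion, a cell is split only if the new hyperplane actually crosses its (relative) interior, which is a single linear-feasibility test solvable in polynomial time by linear programming. Because the total number of cells ever created is $O(m^d)$, and each is touched $O(m)$ times, the whole enumeration runs in polynomial time. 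Step three: for each surviving cell, read off its sign vector, substitute the corresponding signs into $G$ to get a boolean constant, keep the cell iff that constant is true, and emit the associated conjunction of linear (in)equalities. The disjunction of these conjunctions is the desired DNF, and its size is $O(m^d)$ times the size of $G$.

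The main obstacle is making the incremental cell-enumeration genuinely polynomial rather than merely correct: one must avoid generating spurious empty cells, so every cell kept in the working list must be certified nonempty via an LP feasibility call, and one must argue that the number of such calls is bounded by (number of cells) $\times$ (number of hyperplanes), invoking the $O(m^d)$ bound to close the argument. A secondary point, worth a remark, is the handling of strict versus non-strict inequalities: a cell has relative interior cut by some hyperplanes ($0$ entries of $\sigma$) and lies strictly to one side of the others, so the emitted term is a conjunction of equalities together with strict inequalities; this is still a legitimate system of linear (in)equalities, so it fits the required normal form. The contrast with the general boolean case — where $\bigwedge_{i=1}^n (A_{i1} \vee A_{i2})$ forces $2^n$ terms — is precisely that arbitrary boolean atoms need not come from a low-dimensional hyperplane arrangement, so no polynomial cell bound is available there.
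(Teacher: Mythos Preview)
Your proposal is correct and follows essentially the same route as the paper: both arguments observe that the atomic inequalities of $G$ determine a hyperplane arrangement in $\R^d$ whose face count is $O(m^d)$ for fixed $d$, that $G$ is constant on (the relative interior of) each face, and that the desired DNF is the disjunction of those faces on which $G$ evaluates to true. The only difference is cosmetic---the paper cites the inductive argument in Matou\v{s}ek's text both for the bound $\sum_{i=0}^d\binom{N}{i}$ and for the polynomial-time enumeration of the pieces, whereas you spell out an explicit incremental-insertion procedure certified by LP feasibility calls.
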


\begin{proof}
The inequalities appearing in $F$ cut $\R^d$ into many polyhedral
pieces. It might appear at first glance that the number of such
pieces could be $2^N$, where $N$ is the total number of inequalities
in $F$.  Nevertheless, the
number of pieces is bounded by
\[\Phi(d,N)=\binom{N}{0}+\binom{N}{1}+\cdots+\binom{N}{d}.\]
See Section 6.1 of \cite{Matousek02} for a proof by induction.  We have that
$\Phi(d,N)$ is a polynomial in $N$ of degree at most $d$ (where $d$
is fixed), and following the inductive proof, we see that the description of each piece (and the lower-dimensional polyhedral pieces on their boundaries) may be found in
polynomial time. Within the interior of  each piece, $F$ is either always true or
always false. Then the disjunctive normal form is simply $\bigvee_P
\{x\in \interior(P)\},$ where the disjunction is taken over all polyhedral
pieces $P$ (including lower-dimensional ones) such that $F$ is true.
\end{proof}

The counting and generating function problems for quantifier-free Presburger formulas were proven to be polynomial time by Barvinok \cite{Barvinok94}.

Adding one quantifier alternation makes this problem hard, unfortunately: even with two variables, deciding $\exists a\forall b\ F(a,b)$ is NP-hard, as Sch\"oning showed \cite{Schoning97}. In general, Gr\"adel showed \cite{Gradel88} that these problems rapidly increase in difficulty with increased quantifier alternation, following the polynomial-time hierarchy.

The above discussion demonstrates that, if one considers only fixing the number of quantifier alternations or number of variables, the tractability of Presburger arithmetic is understood (and see \cite{Haase14}, where Haase gives a few more variations). A possible modification is to also fix the number of linear inequalities allowed. In this case, Kannan proved \cite{Kannan90} that the decision problem $\exists \mathbf a\forall \mathbf b\ F(\mathbf a, \mathbf b)$ can be solved in polynomial time, and Barvinok and Woods proved \cite{BW03} that the counting and generating function problems could be solved in polynomial time.
 
With more quantifier alternation, the computational complexity of the problem is unknown.

\begin{conj}
For a fixed number of variables and linear inequalities, there exists a polynomial time algorithm to compute the generating function for a Presburger set. Hence the counting problem and the decision problem may be answered in polynomial time.
\end{conj}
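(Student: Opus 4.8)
The plan is to prove the conjecture by induction on the number of quantifier alternations of the defining formula, using Barvinok's algorithm for the base case and the Barvinok--Woods machinery for the inductive step. Fix the number of variables and the number of linear (in)equalities. Merging consecutive like quantifiers, the formula $F(\bb;\uu)$ has the shape $Q_1\vec x_1\, Q_2\vec x_2\cdots Q_t\vec x_t\ G(\uu,\vec x_1,\ldots,\vec x_t)$ with $G$ quantifier-free, with $t$ bounded by the (fixed) number of bound variables, and with the number of (in)equalities in $G$ still a constant. (We could first try to apply Theorem~\ref{thm:PresElim}, but eliminating quantifiers that way trades them for congruences and can blow up the number of inequalities, so instead we keep the quantifiers and eliminate them one block at a time while tracking generating functions.) The base case $t=0$ is the quantifier-free generating function problem: by Proposition~\ref{PropDNF} split $G$ into polynomially many polyhedral pieces, apply Theorem~\ref{thm:gfpolyhedra} to each, and add, producing $\sum_{\uu:\ G(\uu)}\x^{\uu}$ as a short rational generating function in polynomial time.

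For the inductive step, assume we can compute in polynomial time a short rational generating function $f$ in the variables dual to $\uu$ and $\vec x_1$ for the set $S$ defined by the inner $t-1$ blocks $Q_2\vec x_2\cdots Q_t\vec x_t\ G$. To eliminate the outermost quantifier $Q_1\vec x_1$: if it is $\exists$, the target set is the image of $S$ under the coordinate projection forgetting $\vec x_1$, so we invoke the Barvinok--Woods projection theorem; if it is $\forall$, the target set is the complement, in the ambient $\uu$-lattice, of the projection of the complement of $S$ in the ambient $(\uu,\vec x_1)$-lattice, and complementation is the trivial subtraction of $f$ from the generating function $\prod_i(1-z_i)^{-1}$ of the full orthant, so once again the only nontrivial operation is one projection. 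After $t$ iterations we have $\sum_{\uu:\ F(\bb;\uu)}\x^{\uu}$; substituting all $\x$-variables equal to $1$ via Theorem~\ref{thm:subst} then solves the counting problem, and checking whether the result is nonzero solves the decision problem.

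The hard part, and the reason this remains only a conjecture, is keeping the generating function small through the $t$ successive projections. The Barvinok--Woods projection theorem outputs a short rational generating function for a projected set in polynomial time only when the dimension and the number of binomials in each denominator are bounded by fixed constants, and when the input is presented in a suitably controlled form (so that the infinitely many preimages in an unbounded fibre can be handled, e.g. by selecting the lexicographically least preimage). Each projection, however, both increases the number of terms in the sum and enlarges the number of binomials per denominator, and after one alternation the representation no longer has the clean polyhedral shape that the next projection step prefers. Proving that the binomial count stays bounded by a constant and the number of terms stays polynomial after a \emph{fixed} number $t$ of alternations is precisely what is open: for $t=1$ this is essentially Kannan's algorithm together with Barvinok--Woods, but the combinatorial subdivision of parameter space that powers Kannan's $\exists\forall$ argument does not obviously survive a further alternation. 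A proof will likely need either a projection theorem for short rational generating functions whose output parameters are genuinely stable under composition, or a renormalization step that, after each alternation, rewrites the generating function back into a form the existing projection machinery can digest without blow-up.
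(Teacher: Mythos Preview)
The statement is labelled a \emph{conjecture} in the paper, and the paper gives no proof of it; it only records that the quantifier-free case is Barvinok's theorem, that one quantifier alternation is handled by Kannan (decision) and Barvinok--Woods (counting and generating function), and that ``with more quantifier alternation, the computational complexity of the problem is unknown.'' There is therefore nothing to compare your attempt against, and you are right to present a plan together with an explanation of where it stalls rather than a completed argument.

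Your outline is the natural one and is consistent with the paper's framing: resolve the quantifier-free core with Proposition~\ref{PropDNF} plus Theorem~\ref{thm:gfpolyhedra}, then strip quantifier blocks by projection (complement--project--complement for $\forall$). You also pinpoint the genuine obstruction: the Barvinok--Woods projection theorem needs a fixed bound on the number of denominator binomials in its \emph{input}, and a single projection already enlarges that bound, so the hypothesis for the next application is not uniformly met. One point worth sharpening, drawn from the paper's own Section~5.3: universal quantification at the level of rational generating functions is NP-hard in general, so your complement--project--complement step cannot be made polynomial purely as an operation on short rational functions. Any proof of the conjecture must therefore carry, alongside the generating function, some structural witness inherited from the fixed-inequality hypothesis on the original formula, and re-establish that witness after each alternation. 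Neither your proposal nor the paper supplies such a device; that is precisely the missing idea, and your write-up would be stronger if it said explicitly that the inductive hypothesis must include more than ``the generating function is short.''
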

 
\subsection{$B\Leftrightarrow C$}
\label{subsect:BC}
The first question here is how to interpret the problem: how should a quasi-polynomial be represented? The obvious way, listing every constituent polynomial, is inefficient. For example, the generating function $1/(1-x^n)$ corresponds to the quasi-polynomial $g(x)=1$ if $n\big|x$ and $g(x)=0$ otherwise; this requires $n$ polynomials to represent. The solution of Verdoolaege and Woods \cite{VW04} is to use \emph{piecewise step-polynomials}:

\begin{definition}
A {\em step-polynomial\/} $g : \N^n \to \Q$ is a function written in
the form
\[
g(\mathbf p) = \sum_{i=1}^{m}\alpha_i\prod_{j=1}^{d_i} \floor{a_{ij1}p_1+\cdots+a_{ijn}p_n+b_{ij}},
\]
where $\alpha_i,a_{ijk},b_{ij}\in\Q$ and
$\floor{\cdot}$ is the greatest integer function.
\end{definition}

For example, the counting function corresponding to $1/(1-x^n)$ can be written at the step-polynomial $\floor{\frac{x}{n}}-\floor{\frac{x-1}{n}}$.

\begin{definition}A {\em piecewise step-polynomial} is a function $g:\N^n\rightarrow\Q$ such that there exists a finite partition $\bigcup_i (P_i\cap\N^n)$ of $\N^n$ with $P_i$ polyhedra (which may not all be full dimensional) and there exists step-polynomials $g_i$ such that
\[g(\mathbf p)=g_i(\mathbf p) \text{ for }\mathbf p\in P_i\cap\N^n.\]
\end{definition}

 Verdoolaege and Woods prove \cite{VW04} that one may convert between rational generating functions and piecewise step-polynomials in polynomial time (for fixed degree of the step-polynomial). Therefore, the conceptual equivalence, $B\Leftrightarrow C$, is also an algorithmic equivalence, provided piecewise quasi-polynomials are suitably represented.

\subsection{Other Complexity Relationships}
One reason generating functions are so valuable is that algebraic manipulations can be brought into play. We've already seen that substituting in 1 for variables can be done in polynomial time, enabling us to compute cardinality. Several other interesting operations have polynomial time algorithms.

For example, given the generating functions for two sets $S$ and $T$, the generating functions for $S\cap T$, $S\cup T$, and $S\setminus T$ can all be computed in polynomial time \cite{BW03} (assuming fixed number of variables and fixed bound on the number of binomials in the denominators of the generating functions). This means that the Boolean operations and/or/not can be mirrored by generating functions. The key ingredient is the Hadamard product, which can be computed in polynomial time \cite{BW03}.

\begin{definition}
The \emph{Hadamard product} of
\[f(\mathbf x)=\sum_{\mathbf p\in\N^n} c(\mathbf p)\mathbf x^\mathbf p\text{ and }g(\mathbf x)=\sum_{\mathbf p\in\N^n} d(\mathbf p)\mathbf x^\mathbf p\]
is
\[(f\star g)(\mathbf x)=\sum_{\mathbf p\in\N^n}c(\mathbf p)d(\mathbf p)\mathbf x^\mathbf p.\]
\end{definition}

For example, the fact that $f(S\cap T; \mathbf x)=f(S;\mathbf x)\star f(T;\mathbf x)$ immediately shows us how to compute the generating function for $S\cap T$, given $f(S;\mathbf x)$ and $f(T;\mathbf x)$.

Unfortunately, the operation of quantification cannot be similarly mirrored efficiently with generating functions: if we are given $f(S;\mathbf x,y)$ for some $S\subseteq\N^{d+1}$, and define $T=\{\mathbf a\in\N^d:\ \forall b\in \N,\  (\mathbf a, b)\in S\}$, then obtaining $f(T;\mathbf x)$ is NP-hard. To see this, take $S$ to be the Presburger set for a quantifier free formula, $F(a,b)$. We can compute $f(S;x,y)$ in polynomial time, as discussed in Section \ref{subsec:AC}. If we could compute $f(T;x)$ in polynomial time, then we would be able to decide $\exists a\forall b\ F(a,b)$ in polynomial time, by checking if $f(T;1)\ne 0$. But deciding this is NP-hard \cite{Schoning97}, as mentioned in Section \ref{subsec:AC}.

Even simpler sounding tasks can be surprisingly difficult. There is no presently-known ``nice'' way to tell whether a rational generating function, $g(\mathbf x)$ (input as a sum of the basic rational functions from Definition~\ref{def:rgf}), is identically zero. A heavy-handed -- but polynomial time -- algorithm is, after checking that $\mathbf 1$ is not a pole of $g$, to substitute $\mathbf 1$ into $g\star g$: the coefficients of $g\star g$ are nonnegative, so $(g\star g)(\mathbf 1)=0$ if and only if $g=0$. 

\section*{Acknowledgements}
My thanks to the referees for pointers to relevant related works. I am also grateful to them for helping me talk like a logician and like a theoretical computer scientist (when, in fact, I am a combinatorist).

\end{document}